\newtheorem{theorem}{Theorem}
\newtheorem{lemma}{Lemma}
\newtheorem{prop}{Proposition}
\theoremstyle{definition}
\theoremstyle{remark}
\newtheorem*{remark}{Remark}
\theoremstyle{remark}
\title{Equivariant cyclic cocycles on the Boutet de Monvel symbol algebra}
\author{A.V. Boltachev, A.Yu. Savin}
\begin{document}

\maketitle

\begin{abstract}
We construct periodic cyclic cocycles on the algebra of symbols of Boutet de Monvel operators and use them to interpret   
the index formula for elliptic pseudodifferential boundary value problems due to Fedosov  as the Chern--Connes pairing of 
the classes in $K$-theory of elliptic symbols with cyclic cocycles. We also consider the equivariant case. 
Namely, we construct periodic cyclic cocycles on the crossed product of the algebra of symbols with a group acting 
on this algebra by automorphisms. Such crossed products arize in index theory of nonlocal boundary 
value problems with shift operators.
\end{abstract}

\section{Cyclic cocycles on the Boutet de Monvel symbol algebra}
\label{main_results}

\paragraph{Boutet de Monvel symbol algebra.}
Consider $M=\mathbb{R}^2_+$ with the coordinates $(x_1,x_2)$ and the boundary $X=\mathbb{R}$ defined by $x_2=0$.
 Consider the algebra of symbols of Boutet de Monvel operators on $M$ (see~\cite{Bout2,ReSc1}). 
The symbols are assumed to have compact supports. Recall that the elements of algebra $\mathcal{A}$ are the pairs
\begin{equation}
\label{BdM_sym_def}
\widetilde{a}=(a,a_X)\in C^\infty_c(S^*M)\oplus C^\infty_c(S^*X,\mathcal{B}(H_+\oplus\mathbb{C})),
\end{equation}
where $a$ is a smooth function on the cosphere bundle $S^*M$ called the {\em{interior symbol}} satisfying the transmission property:
for all $k\in\mathbb{Z}_+$ and arbitrary $\alpha\in\mathbb{Z}_+$ the following equality holds:
\begin{equation}
\label{trans_prop}
D^k_{x_2}D^\alpha_{\xi_1}a(x_1,0,0,\xi_2)=(-1)^{\alpha}D^k_{x_2}D^\alpha_{\xi_1}a(x_1,0,0,-\xi_2),\quad \xi_2\ne 0,
\end{equation}
where
$$
D^\alpha_{\xi_1}=\left(-i\frac{\partial }{\partial \xi_1}\right)^{\alpha}.
$$
The function $a_X$ in~\eqref{BdM_sym_def} called the {\em{boundary symbol}} is a smooth operator function on 
$$
  S^*X=\{(x_1,\xi_1)\big| x_1\in X, |\xi_1|=1\}=\mathbb{R}\times\{\pm1\}
$$ 
of the form
\begin{equation}
\label{eq-spb1}
a_X=
\begin{pmatrix}
\Pi_+a|_{\partial T^*M} + \Pi'g & c \\[2mm]
\Pi'_{\xi_2}b & r
\end{pmatrix}
\colon
\begin{array}{c}
   H_+\\ 
   \oplus \\
   \mathbb{C}
 \end{array}
 \longrightarrow
 \begin{array}{c}
   H_+\\ 
   \oplus \\
   \mathbb{C}
 \end{array}
\end{equation}
Here
\begin{itemize}
\item $H_+=\mathcal{F}_{x_2\to \xi_2}(\mathcal{S}(\overline{\mathbb{R}}_+))$ is the space of Fourier images of the Schwartz space $\mathcal{S}(\overline{\mathbb{R}}_+)$ of smooth rapidly decreasing at infinity functions on $\overline{\mathbb{R}}_+$;
\item $a\in C^\infty_c(S^*M)$; $r\in C^\infty_c(S^*X)$;
\item $b\in C^\infty_c(T^*X,H_-)$; $c\in C^\infty_c(S^*X, H_+)$;
\item $g\in C^\infty(T^*X,H_+\otimes H_-)$, where $H_-=\mathcal{F}(\mathcal{S}(\overline{\mathbb{R}}_-))$;
\item $\Pi_\pm:H_+\oplus H_-\to H_\pm$ are projections, while $\Pi'$ is the following functional 
$$
\begin{array}{ccc}
\Pi':H_+\oplus H_- & \longrightarrow & \mathbb{C},\\[2mm]
u(\xi_2)& \longmapsto & \displaystyle \lim\limits_{x_2\to 0+} \mathcal{F}^{-1}_{\xi_2\to x_2}(u(\xi_2)).
\end{array}
$$
\end{itemize}
 
Denote by $\mathcal{A}$ the unitization of the algebra of symbols of Boutet de Monvel operators described above.

\paragraph{Periodic cyclic cohomology.}
Let us recall the definition of the periodic cyclic cohomology in terms of operators $(b,B)$ (see~\cite{Con1}). Denote by $C^n(\mathcal{A})={\rm{Hom}}(\mathcal{A}^{n+1},\mathbb{C})$ the space of $(n+1)$-linear functionals $\varphi(a_0,...,a_n)$, where $a_0,...,a_n\in\mathcal{A}$. Such functionals are called   $n$-{\em{cochains}}.
We consider the following operators:
\begin{itemize}
\item 
$B_0:C^n(\mathcal{A})\longrightarrow C^{n-1}(\mathcal{A}),$
$$
(B_0\varphi)(a_0,...,a_{n-1})=\varphi(1,a_0,...,a_{n-1}) - (-1)^n\varphi(a_0,...,a_{n-1},1);
$$
\item $N:C^n(\mathcal{A})\longrightarrow C^n(\mathcal{A})$
$$
N\varphi(a_0,...,a_{n-1})=\sum\limits_{j=0}^{n-1}{(-1)^{(n-1)j}\varphi(a_j,...,a_{j-1})};
$$
\item $B:C^n(\mathcal{A})\longrightarrow C^{n-1}(\mathcal{A}),
B=NB_0.$
\item the Hochschild differential $b:C^n(\mathcal{A})\longrightarrow C^{n+1}(\mathcal{A})$
$$
(b\varphi)(a_0,...,a_{n+1})=\sum\limits^n_{i=0}{(-1)^i\varphi(a_0,...,a_ia_{i+1},...,a_{n+1})} + (-1)^{n+1}\varphi(a_{n+1}a_0,...,a_n).
$$
\end{itemize}

From the relations $b^2=0, B^2=0, bB+Bb=0$, it follows that the following periodic complex is defined
\begin{equation}
\label{period_comp}
...\stackrel{B+b}{\longrightarrow}
C^{ev}(\mathcal{A})\stackrel{B+b}{\longrightarrow}
C^{odd}(\mathcal{A})\stackrel{B+b}{\longrightarrow}
C^{ev}(\mathcal{A})\stackrel{B+b}{\longrightarrow}...,
\end{equation}
where $C^{ev}(\mathcal{A})=\bigoplus\limits_k C^{2k}(\mathcal{A})$, $C^{odd}(\mathcal{A})=\bigoplus\limits_k C^{2k+1}(\mathcal{A})$.
{\it The periodic cyclic cohomology} of   $\mathcal{A}$ is the cohomology of the complex~\eqref{period_comp} denoted by
$$
HP^*(\mathcal{A})=H^*(C^{ev/odd}(\mathcal{A}),B+b).
$$

\paragraph{Periodic cyclic cocycles.}
We define the pair of cochains $(\varphi_1,\varphi_3)\in C^{odd}(\mathcal{A})$
\begin{equation}
\label{odd_cochains}
\varphi_3(a_0,a_1,a_2,a_3)=\int\limits_{S^*M}{a_0da_1da_2da_3},\qquad 
\varphi_1(a_0,a_1)=\frac{1}{2}\int\limits_{S^*X}{{\rm{tr}}'(a_0da_1-a_1da_0)}.
\end{equation}
Here in the first formula $d$ is the exterior differential on $S^*M$ and we integrate the expression defined by the interior symbols. In the second formula in~\eqref{odd_cochains}, $d$ is the exterior differential on $S^*X\simeq\mathbb{R}^1\times\{\pm1\}$ and ${\rm{tr}'}$ is the regularized trace from~\cite{Fds17}
$$
{\rm{tr}'}
\begin{pmatrix}
\Pi_+a + \Pi'g & c \\[2mm]
\Pi'b & r
\end{pmatrix}
=
\Pi'_{\xi_2}g(\xi_2,\xi_2) + r.
$$

\begin{theorem}
\label{cohom_class}
The pair $(\varphi_1, \frac{i}{4\pi}\varphi_3)$ defines a class in the periodic cyclic cohomology $HP^{odd}(\mathcal{A})$. 
This means that the following equalities are valid
\begin{equation}
\label{bB_rels}
B\varphi_1=0;\qquad
-\frac{i}{4\pi}B\varphi_3=b\varphi_1;\qquad
b\varphi_3=0.
\end{equation}
\end{theorem}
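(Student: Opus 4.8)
The plan is to verify the three identities in~\eqref{bB_rels} directly from the definitions of $b$, $B$, and the explicit cochains $\varphi_1,\varphi_3$. I would organize the verification around the fact that $\varphi_3$ is the standard transgression-type cocycle built from integration of a differential form over the closed manifold $S^*M$, while $\varphi_1$ is the analogous lower-dimensional object on $S^*X$ carrying the regularized trace $\operatorname{tr}'$.

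Let me think about each piece. The identity $b\varphi_3 = 0$ is the most classical: $\varphi_3(a_0,a_1,a_2,a_3) = \int_{S^*M} a_0\,da_1\,da_2\,da_3$ is (up to normalization) the cyclic cocycle associated to integration of forms against a closed $3$-current on the commutative algebra $C^\infty_c(S^*M)$. Since $S^*M$ is a manifold without boundary and $d$ is a graded derivation with $d(a_ia_{i+1}) = (da_i)a_{i+1} + a_i\,da_{i+1}$, expanding $b\varphi_3$ produces a telescoping sum in which the Leibniz terms cancel pairwise, and the cyclic wrap-around term $\varphi_3(a_4a_0,\dots)$ combines with the last product term; the vanishing then follows from commutativity of the interior symbols and the fact that $\int_{S^*M} d(\text{exact}) = 0$ on a closed manifold. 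I expect this to be a routine expansion.

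For $B\varphi_1 = 0$, I would compute $B = NB_0$ on the $1$-cochain $\varphi_1$. Applying $B_0$ first yields a $0$-cochain, and the antisymmetric structure $a_0\,da_1 - a_1\,da_0$ together with the normalization by $N$ should force the result to vanish; the key inputs are that $\varphi_1$ is already cyclic (hence interacts simply with $N$) and that $B_0$ inserts or appends the unit $1$, for which $d1 = 0$. The heart of the theorem is the middle relation $-\frac{i}{4\pi}B\varphi_3 = b\varphi_1$. Here one must match the $2$-cochain $B\varphi_3$, obtained by antisymmetrizing $\varphi_3$ with a unit inserted and applying Stokes/transgression, against the Hochschild coboundary $b\varphi_1$, which lives naturally on $S^*X$. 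The essential mechanism is \emph{Stokes' theorem}: $\int_{S^*M} d(\omega) = \int_{S^*X} \iota^*\omega$ where the boundary contribution of $S^*M$ is governed by the restriction to $\partial M$, and the constant $\frac{i}{4\pi}$ is exactly the factor that reconciles the orientation and the normalization of the fiber integration over the $\xi_2$-variable with the regularized trace $\operatorname{tr}'$.

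The main obstacle will be this middle identity, and specifically controlling the boundary term through the operator-valued structure of $a_X$ in~\eqref{eq-spb1}. One must show that when the interior symbol $a$ is restricted to the boundary and the transgression form from $S^*M$ is pushed to $S^*X$, the resulting expression reproduces precisely the combination $\Pi'_{\xi_2}g(\xi_2,\xi_2) + r$ computed by $\operatorname{tr}'$, including the singular fiber-integration over $\xi_2$. The transmission property~\eqref{trans_prop} is what guarantees that the projections $\Pi_\pm$ and the functional $\Pi'$ interact consistently so that these traces are well defined and the Stokes boundary term is finite; I would therefore invoke~\eqref{trans_prop} at the step where the $\xi_2$-integral over the interior symbol is converted into the regularized trace. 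The precise tracking of the factor $\frac{i}{4\pi}$ through the Fourier normalization in $H_\pm = \mathcal{F}(\mathcal{S})$ is where the computation is most delicate and where I would spend the most care.
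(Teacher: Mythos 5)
Your outline of the first and third identities is essentially right: $B\varphi_1=0$ follows from $d1=0$ together with the vanishing of integrals over $S^*X$ of $x_1$-derivatives of compactly supported functions, and $b\varphi_3=0$ is a purely algebraic telescoping via the Leibniz rule plus commutativity of the interior symbol algebra $C^\infty_c(S^*M)$. Note that no Stokes-type input is needed (or available) for $b\varphi_3=0$: your premise that $S^*M$ is a closed manifold is false, since for $M=\mathbb{R}^2_+$ the cosphere bundle has boundary $\partial(S^*M)=S^*M|_X$, and this boundary is precisely what drives the middle identity.

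The genuine gap is in the middle identity. You correctly name Stokes' theorem as the mechanism on the $B\varphi_3$ side (the paper computes $B\varphi_3(a_0,a_1,a_2)=\int_{S^*M}da_0\,da_1\,da_2=\int_{\partial(S^*M)}a_0\,da_1\,da_2$ and then uses homogeneity to rewrite the boundary integral over $\mathbb{S}^1_{x_1}\times\mathbb{R}_{\xi_2}$ at $\xi_1=\pm1$), but your plan contains nothing concrete about how $b\varphi_1$ is to be computed, and that is where the real work lies. The mechanism is that, after expansion and one integration by parts, $b\varphi_1$ collapses to integrals of $\operatorname{tr}'$ of commutators, namely $2b\varphi_1(a_0,a_1,a_2)=\int_{S^*X}\bigl(\operatorname{tr}'[a_2,a_0da_1]-\operatorname{tr}'[a_1,da_2\,a_0]\bigr)$, and one must then invoke Fedosov's formula for the regularized trace of a commutator, $\int_{S^*X}\operatorname{tr}'[u,v]=-i\int_{S^*X}\Pi'\bigl(u_{\xi_2}v\bigr)$ from \cite{Fds17}, which converts the failure of $\operatorname{tr}'$ to be tracial into an explicit convergent integral over $\mathbb{S}^1_{x_1}\times\mathbb{R}_{\xi_2}$ that matches the Stokes boundary term and fixes the constant $\frac{i}{4\pi}$. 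Your appeal to the transmission property and to ``reproducing $\Pi'_{\xi_2}g(\xi_2,\xi_2)+r$'' does not substitute for this: the issue is not that $\operatorname{tr}'$ is well defined, but that it fails to vanish on commutators by a computable amount (which is exactly why $b\varphi_1\neq 0$ and why $\varphi_1$ alone is not a cyclic cocycle). Without the commutator-trace formula, the middle identity cannot be established along the route you propose.
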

\begin{proof}
1. Let us first prove the left most equality in~\eqref{bB_rels}:
$$
B\varphi_1(a_0)=\varphi_1(1,a_0)-\varphi_1(a_0,1)=\frac{1}{2}\int\limits_{S^*X}{{\rm{tr}}'(a_{0x_1})dx_1}+\frac{1}{2}\int\limits_{S^*X}{{\rm{tr}}'(a_{0x_1})dx_1}=0,
$$
where $a_{0x_1}= \partial a_0/\partial x_1$ and we used the fact that integrals of exact forms over $S^*X$ are equal to zero.

2. Let us now prove that $b\varphi_3=0$. By the definition of the Hochschild differential $b$ we have
\begin{multline*}
(b\varphi_3)(a_0,a_1,a_2,a_3,a_4)=\\
\varphi_3(a_0a_1,a_2,a_3,a_4) - 
\varphi_3(a_0,a_1a_2,a_3,a_4) + \varphi_3(a_0,a_1,a_2a_3,a_4) - 
\varphi_3(a_0,a_1,a_2,a_3a_4) + 
\varphi_3(a_4a_0,a_1,a_2,a_3)=
\\
=\int{\left(a_0a_1\,da_2da_3da_4 - 
a_0\,d(a_1a_2)da_3da_4 + 
a_0\,da_1d(a_2a_3)da_4 - 
a_0\,da_1da_2d(a_3a_4) + 
a_4a_0\,da_1da_2da_3\right)}=
\\
=\int{\left(a_0a_1\,da_2da_3da_4 - 
a_0\,(da_1)a_2da_3da_4 - 
a_0a_1\,da_2da_3da_4 + 
a_0\,da_1(da_2)a_3da_4\right.} +
\\
\int{\left.a_0\,(da_1)a_2da_3da_4 - 
a_0\,da_1da_2(da_3)a_4 -
a_0\,da_1(da_2)a_3da_4 + 
a_4a_0\,da_1da_2da_3\right)}=
\\
=
\int{\left(a_0\,da_1da_2(da_3)a_4 - 
a_4a_0\,da_1da_2da_3\right)}=0.
\end{multline*}

3. Finally, we prove the second equality in~\eqref{bB_rels}. To this end we calculate $b\varphi_1$:
\begin{multline*}
2b\varphi_1(a_0,a_1,a_2)=2\left(\varphi_1(a_0a_1,a_2) - \varphi_1(a_0,a_1a_2) + \varphi_1(a_2a_0,a_1)\right)=
\\
\int\limits_{S^*X}{{\rm{tr}}'(a_0a_1da_2-a_2d(a_0a_1) - 
\int\limits_{S^*X}{\rm{tr}}'a_0d(a_1a_2)-a_1a_2da_0+
\int\limits_{S^*X}{\rm{tr}}'a_2a_0da_1-a_1d(a_2a_0))}=
\\
\int\limits_{S^*X}{{\rm{tr}}'(
a_0a_1da_2-a_2d(a_0a_1)-a_0da_1a_2-a_0a_1da_2+a_1a_2da_0+a_2a_0da_1-a_1da_2a_0-a_1a_2da_0
)}=
\\
\int\limits_{S^*X}{{\rm{tr}}'(
-a_2d(a_0a_1)-a_0da_1a_2+a_2a_0da_1-a_1da_2a_0
)}=
\\
\int\limits_{S^*X}{{\rm{tr}}'(
da_2a_0a_1-a_0da_1a_2+a_2a_0da_1-a_1da_2a_0
)}=
\int\limits_{S^*X}{\left({\rm{tr}}'[a_2,a_0da_1]-
{\rm{tr}}'[a_1,da_2a_0]\right)}.
\end{multline*}
Here in the fifth equality we integrated by parts the first term.
Now let us use the formula from~\cite{Fds17} for the regularized trace of commutators. We obtain
\begin{multline}
\label{b_commutators}
2b\varphi_1(a_0,a_1,a_2)=-\int\limits_{S^*X}{\left(i\Pi'\left(a_{2\xi_2}a_0da_1\right)
- i\Pi'\left(a_{1\xi_2}da_2a_0\right)\right)}=
\\
-\frac{i}{2\pi}\int\limits_{\mathbb{S}^1_{x_1}\times\mathbb{R}_{\xi_2}}{\hspace{-5pt}\left.\left(a_0da_1a_{2\xi_2} - a_0a_{1\xi_2}da_2\right)\right|^{\xi_1=1}_{\xi_1=-1}d\xi_2}=
-\frac{i}{2\pi}\int\limits_{\mathbb{S}^1_{x_1}\times\mathbb{R}_{\xi_2}}{\hspace{-5pt}\left.\left(a_0a_{1x_1}a_{2\xi_2} - a_0a_{1\xi_2}a_{2x_1}\right)\right|^{\xi_1=1}_{\xi_1=-1}dx_1d\xi_2}=
\\
-\frac{i}{2\pi}\int\limits_{\mathbb{S}^1_{x_1}\times\mathbb{R}_{\xi_2}}{\hspace{-5pt}\left.a_0\left(a_{1x_1}a_{2\xi_2} - a_0a_{1\xi_2}a_{2x_1}\right)\right|^{\xi_1=1}_{\xi_1=-1}dx_1d\xi_2}.
\end{multline}
Here in the second equality we replaced the functional $\Pi'$ by the integral, since $(\dots)|_{\xi_1=-1}^{\xi_1=1}$ is of order $O(\xi_2^{-2})$ at infinity and the integral converges.

Now let us calculate $B\varphi_3$.
For operators $B_0$ and $B$ we have
\begin{equation*}
(B_0\varphi_3)(a_0,a_1,a_2) = \varphi_3(1,a_0,a_1,a_2)
+\varphi_3(a_0,a_1,a_2,1)=\varphi_3(1,a_0,a_1,a_2).
\end{equation*}
\begin{multline}
\label{B_int1}
(B\varphi_3)(a_0,a_1,a_2) = (NB_0\varphi_3)(a_0,a_1,a_2) = \varphi_3(1,a_0,a_1,a_2) + \varphi_3(a_0,a_1,a_2, 1) \\
+ \varphi_3(a_1,a_2,1,a_0) + \varphi_3(a_2,1,a_0,a_1) = \varphi_3(1,a_0,a_1,a_2)=\\
=\int\limits_{S^*M}{da_0da_1da_2}=\int\limits_{\partial(S^*M)}{\hspace{-5pt}a_0da_1da_2}.
\end{multline}
Let us use the following change of variables in the last integral in~\eqref{B_int1}: 
\begin{equation}
\label{var_change}
\begin{array}{ccc}
\mathbb{S}^1\times\left(\mathbb{R}_{\xi_1=1}\cup\mathbb{R}_{\xi_1=-1}\right) & \longrightarrow & \partial(S^*M)=\mathbb{S}^1_{x_1}\times\mathbb{S}^1_{\xi_1,\xi_2}\vspace{2mm}\\
(x_1,\pm1,\xi_2) & \stackrel{f}{\longmapsto} & \displaystyle\left(x_1,\frac{\pm1}{\sqrt{\xi_2^2+1}}, \frac{\xi_2}{\sqrt{\xi_2^2+1}}\right).
\end{array}
\end{equation}
Then we obtain
\begin{multline}
\label{B_phi_3_res}
(B\varphi_3)(a_0,a_1,a_2) =\int\limits_{\partial(S^*M)}{a_0da_1da_2}=\int\limits_{\mathbb{S}^1\times \mathbb{S}^1}{a_0da_1da_2}=\int\limits_{\mathbb{S}^1\times\left(\mathbb{R}_{\xi_1=1}\cup\mathbb{R}_{\xi_1=-1}\right)}{\hspace{-30pt}f^*(a_0da_1da_2)}=
\\
\int\limits_{\mathbb{S}^1\times \mathbb{R}}{a_0da_1da_2\big|_{\xi_1=-1}^{\xi_1=1}}=
\int\limits_{\mathbb{S}^1\times \mathbb{R}}{a_0\left.\left(a_{1x_1}dx_1 + a_{1\xi_2}d\xi_2\right)\left(a_{2x_1}dx_1 + a_{2\xi_2}d\xi_2\right)\right|_{\xi_1=-1}^{\xi_1=1}}=
\\
\int\limits_{\mathbb{S}^1\times \mathbb{R}}{a_0\left.\left(a_{1x_1}a_{2\xi_2} - a_{1\xi_2}a_{2x_1}\right)\right|_{\xi_1=-1}^{\xi_1=1}dx_1\wedge d\xi_2}.
\end{multline}
Here in the fourth integral we used the homogeneity of symbols.

Finally, comparing~\eqref{B_phi_3_res} with~\eqref{b_commutators}, we obtain the desired relation
$$
b\varphi_1=-\frac{i}{4\pi}B\varphi_3.
$$
\end{proof}

\section{Equivariant cyclic cocycles}

As in Section~\ref{main_results}, let $\mathcal{A}$ be the algebra of Boutet de Monvel symbols over $M$. 
Let us consider a group $\Gamma\subset{\rm{Diff}}(M)$ of diffeomorphisms such that $\Gamma(X)=X$. 
We denote symbols in $\mathcal{A}$ by $\widetilde{a}=(a,a_X)$ and consider the group action on these elements.

$\Gamma$ acts on $\mathbb{R}^2_+$ by diffeomorphisms $\gamma:\mathbb{R}^2_+\longrightarrow\mathbb{R}^2_+$. 
The action of $\Gamma$ on the interior symbols is given by
$$
a\longmapsto {(\partial\gamma)^{*}}^{-1}a,
$$
where $\partial\gamma:T^*M\longrightarrow T^*M$ is the codifferential.
In its turn, the action of $\Gamma$ on the boundary symbols is given by
$$
\begin{array}{rcl}
C^\infty(S^*X,\mathcal{B}(H_+\oplus\mathbb{C}))& \longrightarrow & C^\infty(S^*X,\mathcal{B}(H_+\oplus\mathbb{C}))
\\[2mm]
a_X(x,\xi)& \longmapsto &\varkappa_\lambda a_X(\partial\gamma^{-1}(x,\xi)) \varkappa^{-1}_\lambda,\qquad \lambda=\lambda(x_1)=\left.\displaystyle\frac{\partial\gamma_2}{\partial x_2}\right|_{x_2=0},
\end{array}
$$
where
$$
\begin{array}{rcl}
\varkappa_\lambda:H_+\oplus\mathbb{C} & \longrightarrow & H_+\oplus\mathbb{C}\\[2mm]
(u(\xi_2),v) & \longmapsto & (\lambda^{1/2} u(\lambda\xi_2),v).
\end{array}
$$
Here the action of $\Gamma$ on $M$ and $X$ is lifted to the bundles ${T}^*M$ and $T^*X$ using codifferentials $\partial \gamma=(d\gamma^t)^{-1}$
of the corresponding diffemorphisms (here $d\gamma$ is the differential of $\gamma$, while $d\gamma^t$ is its dual mapping of the cotangent bundle).
For brevity we denote the group actions given above by
$$
(a,a_X)\longmapsto \gamma(a,a_X).
$$
The action of $\Gamma$ on $\mathcal{A}$ allows to define the algebraic crossed product denoted by $\mathcal{A}\rtimes\Gamma$. Its elements are  functions on $\Gamma$ with values in $\mathcal{A}$ denoted by $\widetilde{a}=\{\widetilde{a}(\gamma)\}$.
Let us recall the definition of the product of elements in $\mathcal{A}\rtimes\Gamma$. Given $\{\widetilde{a}(\gamma)\},\{\widetilde{b}(\gamma)\}\in\mathcal{A}\rtimes\Gamma$, the product is defined by
$$
\{\widetilde{a}(\gamma)\}\cdot\{\widetilde{b}(\gamma)\}=\left\{\sum\limits_{\gamma_1\gamma_2=\gamma}\widetilde{a}(\gamma_1)\gamma_1(\widetilde{b}(\gamma_2))\right\}.
$$
Let us now give an analogue of the formula for the regularized trace of commutator~\cite{Fds17}. Denote by $\Omega_X$ the differential graded algebra generated by the boundary symbols and the differential forms on $S^*X$.
\begin{prop}
\label{trace_prop}
Given $\widetilde{\omega}_1,\widetilde{\omega}_2\in\Omega_X\rtimes\Gamma$, we have
\begin{equation}
\label{trace_Fed_gen}
\int\limits_{S^*X}{\rm{tr}'}[\widetilde{\omega}_1,\widetilde{\omega}_2] = -i\int\limits_{S^*X}\Pi'\left(d\omega_1 \omega_2\right),
\end{equation}
where $[\widetilde{\omega}_1,\widetilde{\omega}_2] = \widetilde{\omega}_1\widetilde{\omega}_2 - (-1)^{{\rm{deg}}\widetilde{\omega}_1{\rm{deg}}\widetilde{\omega}_2}\widetilde{\omega}_2\widetilde{\omega}_1$, while $\omega_1, \omega_2$ are the principal symbols of the boundary symbols $\widetilde{\omega}_1,\widetilde{\omega}_2$.
\end{prop}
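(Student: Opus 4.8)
The plan is to deduce the statement from the non-equivariant Fedosov commutator--trace identity \cite{Fds17} (the case $\Gamma=\{e\}$ of \eqref{trace_Fed_gen}) together with one genuinely new ingredient: the $\Gamma$-invariance of the functional $u\mapsto\int_{S^*X}\mathrm{tr}'(u)$ on $\Omega_X$. First I would write a general element of $\Omega_X\rtimes\Gamma$ as a finite sum $\sum_\gamma\omega(\gamma)T_\gamma$, where $T_\gamma$ implements the automorphism via $T_\gamma\omega=\gamma(\omega)T_\gamma$ and has form-degree $0$, and recall that on the crossed product the functional $\int_{S^*X}\mathrm{tr}'$ extracts the component at $\gamma=e$ and integrates it. By bilinearity it then suffices to treat homogeneous elements $\widetilde\omega_1=\omega_1T_{\gamma_1}$, $\widetilde\omega_2=\omega_2T_{\gamma_2}$ with $\omega_1,\omega_2\in\Omega_X$, so that $\deg\widetilde\omega_i=\deg\omega_i$.

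Computing the two orderings,
\[
\widetilde\omega_1\widetilde\omega_2=\omega_1\,\gamma_1(\omega_2)\,T_{\gamma_1\gamma_2},\qquad
\widetilde\omega_2\widetilde\omega_1=\omega_2\,\gamma_2(\omega_1)\,T_{\gamma_2\gamma_1},
\]
I observe that the $T_e$-component of the graded commutator survives only when $\gamma_2=\gamma_1^{-1}$; writing $\gamma=\gamma_1$ this gives
\[
\int_{S^*X}\mathrm{tr}'[\widetilde\omega_1,\widetilde\omega_2]
=\int_{S^*X}\mathrm{tr}'\bigl(\omega_1\gamma(\omega_2)\bigr)
-(-1)^{\deg\omega_1\deg\omega_2}\int_{S^*X}\mathrm{tr}'\bigl(\omega_2\gamma^{-1}(\omega_1)\bigr).
\]
The key step is to rewrite the second term. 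Since $\gamma$ is an algebra automorphism, $\gamma\bigl(\omega_2\gamma^{-1}(\omega_1)\bigr)=\gamma(\omega_2)\,\omega_1$, so once the invariance $\int_{S^*X}\mathrm{tr}'(\gamma v)=\int_{S^*X}\mathrm{tr}'(v)$ is available I may replace $\int\mathrm{tr}'(\omega_2\gamma^{-1}(\omega_1))$ by $\int\mathrm{tr}'(\gamma(\omega_2)\omega_1)$. The right-hand side then collapses into the graded commutator $\int_{S^*X}\mathrm{tr}'[\omega_1,\gamma(\omega_2)]$ inside $\Omega_X$, both entries now being ordinary boundary symbols, and applying the non-equivariant identity to this pair yields $-i\int_{S^*X}\Pi'\bigl(d\omega_1\,\gamma(\omega_2)\bigr)$.

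It remains to identify this with the right-hand side of \eqref{trace_Fed_gen}. Reading $d$ as the $\Omega_X$-differential applied componentwise (so that $dT_\gamma=0$ need never be invoked and no commutation of $d$ with the $\Gamma$-action is required), the product $d\omega_1\,\omega_2$ in the crossed product has $T_e$-component $d\omega_1\,\gamma(\omega_2)$ exactly when $\gamma_2=\gamma^{-1}$, giving $-i\int_{S^*X}\Pi'(d\omega_1\gamma(\omega_2))$ and matching the previous line. Summing over $\Gamma$ by bilinearity finishes the argument; note in particular that for $\gamma_2\neq\gamma_1^{-1}$ both sides vanish, so the case analysis is consistent.

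The main obstacle is precisely the $\Gamma$-invariance of $u\mapsto\int_{S^*X}\mathrm{tr}'(u)$, on which the collapse of the commutator rests. Unwinding the action, $\gamma(u)=\varkappa_\lambda\bigl((\partial\gamma^{-1})^*u\bigr)\varkappa_\lambda^{-1}$ with $\lambda=\lambda(x_1)$, so invariance splits into two checks: that the codifferential $\partial\gamma$ preserves the integral of top-degree forms over $S^*X$ (a routine change-of-variables/orientation statement), and that the fiberwise regularized trace is unchanged under conjugation by the dilation $\varkappa_\lambda\colon u(\xi_2)\mapsto\lambda^{1/2}u(\lambda\xi_2)$. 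The latter is the delicate point, because $\mathrm{tr}'$ is \emph{not} conjugation-invariant in general---that failure is exactly what makes the commutator formula nonzero. One must verify that, for this particular scaling, the factors $\lambda^{1/2}$ and $\lambda^{-1/2}$ combine with the substitution $\xi_2\mapsto\lambda\xi_2$ so that the diagonal evaluation $\Pi'_{\xi_2}g(\xi_2,\xi_2)$ defining $\mathrm{tr}'$ is left intact and no boundary anomaly at $\xi_2\to\infty$ is produced; equivalently, that the fiberwise-unitary $\varkappa_\lambda$ lies in the kernel of the trace defect. I expect the bulk of the work to sit in this cancellation, after which the remainder is the formal bookkeeping sketched above.
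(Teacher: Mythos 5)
Your proposal is correct and follows essentially the same route as the paper: reduce to homogeneous elements $\omega_1\delta_{\gamma}$, $\omega_2\delta_{\gamma^{-1}}$, use the invariance of $\int_{S^*X}{\rm{tr}'}$ under the $\Gamma$-action (split into the codifferential part and the $\varkappa_\lambda$-conjugation part) to untwist the graded commutator into an ordinary one, and then apply the non-equivariant Fedosov identity. The single computation you defer---that ${\rm{tr}'}(\varkappa_\lambda a\varkappa_\lambda^{-1})={\rm{tr}'}(a)$---is precisely the paper's Lemma~\ref{lem_inv} and holds by the substitution $\eta\mapsto\lambda\eta$ in the diagonal evaluation $\Pi'_{\xi_2}g(\xi_2,\xi_2)$ (the Jacobian cancels the factor $\lambda$ coming from the rescaled kernel), so your outline closes.
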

\begin{proof}
It suffices to consider the forms of the form $\widetilde{\omega}_1=a_1 \delta_\gamma,\widetilde{\omega}_2=a_2 \delta_{\gamma^{-1}}$, where $\delta_\gamma$ is the delta-function on $\Gamma$ with support at $\gamma\in\Gamma$. 

1. Let us first consider the case where $\lambda=1$.
We have
\begin{multline}
\label{Fed_gen_proof_1}
\int\limits_{S^*X}{\rm{tr}'}[\widetilde{\omega}_1,\widetilde{\omega}_2]=
\int\limits_{S^*X}{\rm{tr}'}\left(a_1{\partial\gamma^*}^{-1}(a_2) -(-1)^{{\rm{deg}}a_1{\rm{deg}}a_2}a_2\partial\gamma^*(a_1)\right)=
\\
=\int\limits_{S^*X}\partial\gamma^*\left({\rm{tr}'}\left(a_1{\partial\gamma^*}^{-1}(a_2)\right)\right) - 
(-1)^{{\rm{deg}}a_1{\rm{deg}}a_2}\int\limits_{S^*X}{\rm{tr}'}\left(a_2\partial\gamma^*(a_1)\right)=
\\
\int\limits_{S^*X}{\rm{tr}'}\left(\partial\gamma^*(a_1)a_2-(-1)^{{\rm{deg}}a_1{\rm{deg}}a_2}a_2\partial\gamma^*(a_1)\right)=
\int\limits_{S^*X}{\rm{tr}'}[\partial\gamma^*a_1,a_2]=
-i\int\limits_{S^*X}\Pi'\frac{\partial}{\partial\xi_n}\left(\partial\gamma^*a_1\right)a_2,
\end{multline}
where we used Fedosov's formula. On the other hand,  the right hand side in~\eqref{trace_Fed_gen} is equal to
\begin{equation}
\label{Fed_gen_proof_2}
\int\limits_{S^*X}\Pi'\left(d\omega_1 \omega_2\right) = 
\int\limits_{S^*X}\Pi'\frac{\partial a_1}{\partial \xi_n} {\partial\gamma^*}^{-1}(a_2) =
\int\limits_{S^*X}\Pi'\frac{\partial}{\partial\xi_n}\left(\partial\gamma^*a_1\right)a_2.
\end{equation}
Thus, by \eqref{Fed_gen_proof_1} and~\eqref{Fed_gen_proof_2}, \eqref{trace_Fed_gen} is true.

2. Let us now consider the case where $\partial\gamma={\rm{Id}}$. The left hand side in~\eqref{trace_Fed_gen} is equal to
\begin{equation}
\label{trace_Fed_gen_2}
\tau_X[\widetilde{\omega}_1,\widetilde{\omega}_2]=
\int\limits_{S^*X}{\rm{tr}'}\left(a_1\varkappa_\lambda^{-1}a_2\varkappa_\lambda - a_2\varkappa_\lambda a_1 \varkappa_\lambda^{-1}\right).
\end{equation}
To prove this part of the statement, we use the auxiliary lemma. 
\begin{lemma}
\label{lem_inv}
The following equality is valid
$$
{\rm{tr}'}(\varkappa_\lambda a\varkappa_\lambda^{-1})={\rm{tr}'}a.
$$
\end{lemma}
\begin{proof}
Note that when we find the regularized trace ${\rm{tr}'}$ of a boundary symbol, it suffices to compare only the traces of Green operators in the top left blocks of the matrices. We have
$$
{\rm{tr}'}(\varkappa_\lambda a\varkappa_\lambda^{-1})=\lambda{\rm{tr}'}\left(\Pi'_\eta g(\lambda\xi,\lambda\eta)\right)=\lambda\Pi'g(\lambda\eta,\lambda\eta)={\rm{tr}'}a.
$$
\end{proof}
Let us now move back to the proof of the case 2 of Proposition~\ref{trace_prop}. Applying Lemma~\ref{lem_inv} to the first term in~\eqref{trace_Fed_gen_2}, we obtain
\begin{multline*}
\int\limits_{S^*X}{\rm{tr}'}\left(a_1\varkappa_\lambda^{-1}a_2\varkappa_\lambda - a_2\varkappa_\lambda a_1 \varkappa_\lambda^{-1}\right)=
\int\limits_{S^*X}{\rm{tr}'}\left(\varkappa_\lambda a_1\varkappa_\lambda^{-1}a_2 - a_2\varkappa_\lambda a_1 \varkappa_\lambda^{-1}\right)=
\\
\int\limits_{S^*X}{\rm{tr}'}\left[\varkappa_\lambda a_1\varkappa_\lambda^{-1},a_2\right]=\int\limits_{S^*X}\Pi'\left(\frac{\partial}{\partial\xi_n}(\varkappa_\lambda a_1 \varkappa_\lambda^{-1})a_2\right)=
\int\limits_{S^*X}\int \frac{d}{d\xi}a_1(\lambda\xi)a_2(\xi)d\xi=
\\
\int\limits_{S^*X}\int a'_1(\lambda\xi)a_2(\xi)\lambda d\xi=
\int\limits_{S^*X}\int a'_1(\eta)a_2(\eta/\lambda)d\eta=
\int\limits_{S^*X}\Pi'(d\omega_1\omega_2).
\end{multline*}
Thus we obtained the expression for the right part of~\eqref{trace_Fed_gen}, from which follows that Proposition~\ref{trace_prop} is valid also for the case 2.

\end{proof}

Proposition~\ref{trace_prop} enables us to prove Theorem~\ref{cohom_class} in the case of equivariant symbols as elements in $\mathcal{A}\rtimes\Gamma$.

Let us define the pair of cochains $(\varphi_1,\varphi_3)$:
$$
\varphi_3(a_0,a_1,a_2,a_3)=\int\limits_{S^*M}{(a_0da_1da_2da_3)(e)},\qquad
\varphi_1(a_0,a_1)=\frac{1}{2}\int\limits_{S^*X}{{\rm{tr}}'(a_0da_1-a_1da_0)(e)},
$$
where $a(e)$ stands for the element of the crossed product at $\gamma=e$.

\begin{theorem}
\label{cohom_class_group}
The pair of cochains $\left(\varphi_1, \frac{i}{4\pi}\varphi_3\right)$ defines a class in periodic cyclic cohomology $HP^{odd}(\mathcal{A}\rtimes\Gamma)$. 
This means that the following equalities are valid
\begin{equation}
\label{bB_rels_group}
B\varphi_1=0;\qquad
-\frac{i}{4\pi}B\varphi_3=b\varphi_1;\qquad
b\varphi_3=0.
\end{equation}
\end{theorem}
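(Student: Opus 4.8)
The plan is to transplant the three computations of Theorem~\ref{cohom_class} to the crossed product, isolating the two places where the group action could interfere. Two structural observations make this possible. First, equip the crossed-product differential graded algebras $\Omega_M\rtimes\Gamma$ and $\Omega_X\rtimes\Gamma$ with the differential acting componentwise, $d(a\delta_\gamma)=(da)\delta_\gamma$. Because $\Gamma$ acts by pullbacks of codifferentials, the action commutes with $d$, so $d$ obeys the graded Leibniz rule for the twisted product and commutes with the evaluation $\widetilde\omega\mapsto\widetilde\omega(e)$ at the identity. Second, the functional $\widetilde\omega\mapsto\int_{S^*M}\widetilde\omega(e)$ is a graded trace on $\Omega_M\rtimes\Gamma$: writing $(\widetilde\omega_1\widetilde\omega_2)(e)=\sum_\gamma\omega_1(\gamma)\,\gamma(\omega_2(\gamma^{-1}))$, the scalar-valued interior forms graded-commute pointwise and $\int_{S^*M}$ is $\Gamma$-invariant, so after applying $\gamma^{-1}$ and relabelling one recovers $\int_{S^*M}(\widetilde\omega_2\widetilde\omega_1)(e)$ up to the Koszul sign. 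These are precisely the two ingredients used in the non-equivariant proof.

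Granting this, the first and third relations need no new idea. For $B\varphi_1=0$, the unit of $\mathcal{A}\rtimes\Gamma$ is $\delta_e$, which acts trivially, and $d\delta_e=0$; hence $B\varphi_1(a_0)=\int_{S^*X}\mathrm{tr}'\big(d(a_0(e))\big)=\int_{S^*X}d\big(\mathrm{tr}'\,a_0(e)\big)=0$ by Stokes, exactly as before. For $b\varphi_3=0$, every cancellation in the Leibniz expansion of Theorem~\ref{cohom_class} is between two terms with the factors in the same order, so no rearrangement past the twisted product is needed and the expansion goes through verbatim; the one genuinely cyclic step that closes the computation is the graded-trace identity for $\int_{S^*M}(\cdot)(e)$ just established.

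The substance is the middle relation $-\tfrac{i}{4\pi}B\varphi_3=b\varphi_1$. I would first compute $B\varphi_3$: in $NB_0\varphi_3$ every summand except $\varphi_3(1,a_0,a_1,a_2)$ contains a factor $d\delta_e=0$ and drops out, leaving $\int_{S^*M}(da_0da_1da_2)(e)=\int_{\partial(S^*M)}(a_0da_1da_2)(e)$ by componentwise Stokes, and the change of variables~\eqref{var_change} turns this into the same boundary integral over $\mathbb{S}^1_{x_1}\times\mathbb{R}_{\xi_2}$ as in~\eqref{B_phi_3_res}, now evaluated on the identity component. Next I would reduce $b\varphi_1$: using only Leibniz, the vanishing $\int_{S^*X}\mathrm{tr}'\,d(\cdot)(e)=0$, and regrouping (none of which uses a trace property), the expression collapses to $\int_{S^*X}\big(\mathrm{tr}'[a_2,a_0da_1]-\mathrm{tr}'[a_1,da_2a_0]\big)(e)$, the graded commutators now taken in $\Omega_X\rtimes\Gamma$. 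Finally I would invoke Proposition~\ref{trace_prop} to replace each $\int_{S^*X}\mathrm{tr}'$ of a graded commutator by its defect $-i\int_{S^*X}\Pi'(d\omega\,\omega')$, pass from $\Pi'$ to the convergent $\xi_2$-integral as in~\eqref{b_commutators}, and compare with $B\varphi_3$.

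The main obstacle is the bookkeeping in this last step: one must check that the group twisting recorded by Proposition~\ref{trace_prop} — the factors $\partial\gamma^*$ and $\varkappa_\lambda$ — is exactly the twisting already carried by the identity component $(a_0da_1da_2)(e)$ of $B\varphi_3$. Concretely, Proposition~\ref{trace_prop} should be applied component by component, splitting $a_0,a_1,a_2$ into their $\delta_\gamma$-parts so that each commutator lands in the identity component, matching the two cases ($\lambda=1$ and $\partial\gamma=\mathrm{Id}$) treated in its proof. Once both sides are written out on $\mathbb{S}^1_{x_1}\times\mathbb{R}_{\xi_2}$ with the $\xi_1=\pm1$ jump, the twisted products coincide and the comparison reduces to the same scalar identity $2b\varphi_1=-\tfrac{i}{2\pi}B\varphi_3$ proved in Theorem~\ref{cohom_class}, giving the claim.
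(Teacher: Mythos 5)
Your proposal is correct and follows essentially the same route as the paper: the first and third relations are carried over from Theorem~\ref{cohom_class} (your explicit identification of the $\Gamma$-invariance of $\int_{S^*M}(\cdot)(e)$ and the componentwise differential just makes precise what the paper leaves as ``similar''), and the middle relation is proved exactly as in the paper by reducing $b\varphi_1$ to graded commutators in $\Omega_X\rtimes\Gamma$, invoking Proposition~\ref{trace_prop} for the trace defect, and matching the result against $B\varphi_3$ computed via the change of variables~\eqref{var_change}.
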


\begin{proof}
The proof of the first and third equalities in~\eqref{bB_rels_group} is similar to that in the proof of Theorem~\ref{cohom_class}.

Let us prove the second equality in~\eqref{cohom_class_group}.
We have
$$
2b\varphi_1(a_0,a_1,a_2)=2(\varphi_1(a_0a_1,a_2) - \varphi_1(a_0,a_1a_2) + \varphi_1(a_2a_0,a_1))=
$$
$$
\int\limits_{S^*X}{{\rm{tr}}'(
a_0a_1da_2-a_2d(a_0a_1)-a_0d(a_1)a_2-a_0a_1da_2+a_1a_2da_0+a_2a_0da_1-a_1d(a_2)a_0-a_1a_2da_0
)(e)}=
$$
$$
\int\limits_{S^*X}{{\rm{tr}}'(
-a_2d(a_0a_1)-a_0d(a_1)a_2+a_2a_0da_1-a_1d(a_2)a_0
)(e)}=
$$
$$
\int\limits_{S^*X}{{\rm{tr}}'(
(da_2)a_0a_1-a_0d(a_1)a_2+a_2a_0da_1-a_1d(a_2)a_0
)(e)}=
\int\limits_{S^*X}{{\rm{tr}}'[a_2,a_0da_1](e)}-
\int\limits_{S^*X}{{\rm{tr}}'[a_1,d(a_2)a_0](e)}.
$$
Now we use Proposition~\ref{trace_prop} and obtain
\begin{multline}
\label{cc_group_right}
-\int\limits_{S^*X}{i\Pi'\left(\frac{\partial a_2}{\partial \xi_2}a_0da_1\right)(e)}
+\int\limits_{S^*X}{i\Pi'\left(\frac{\partial a_1}{\partial \xi_2}(da_2)a_0\right)(e)}=
\\
-i\frac{1}{2\pi}\int\limits_{\mathbb{S}^1_x}{\int\limits_{\mathbb{R}_{\xi_2}}\left.\left(a_0da_1\frac{\partial a_2}{\partial \xi_2}- a_0\frac{\partial a_1}{\partial \xi_2}(da_2)\right)(e)\right|^{\xi_1=1}_{\xi_1=-1}d\xi_2}
=
\\
-\frac{i}{2\pi}\int\limits_{\mathbb{S}^1_x}{\int\limits_{\mathbb{R}_{\xi_2}}\left.\left(a_0\frac{\partial a_1}{\partial x_1}\frac{\partial a_2}{\partial \xi_2} - a_0\frac{\partial a_1}{\partial \xi_2}\frac{\partial a_2}{\partial x_1}\right)(e)\right|^{\xi_1=1}_{\xi_1=-1}dx_1d\xi_2}
=
\\
-\frac{i}{2\pi}\int\limits_{\mathbb{S}^1_x\times\mathbb{R}_{\xi_2}}{\left.a_0\left(\frac{\partial a_1}{\partial x_1}\frac{\partial a_2}{\partial \xi_2} - a_0\frac{\partial a_1}{\partial \xi_2}\frac{\partial a_2}{\partial x_1}\right)(e)\right|^{\xi_1=1}_{\xi_1=-1}dx_1d\xi_2}.
\end{multline}
On the other hand, consider the left hand side in~\eqref{bB_rels_group}. We have
\begin{equation}
\label{B_int1_group}
(B\varphi_3)(a_0,a_1,a_2) = \int\limits_{\partial(S^*M)}{(a_0da_1da_2)(e)}.
\end{equation}
We use the change of variables~\eqref{var_change} in the last integral in~\eqref{B_int1_group}: 
\begin{multline}
\label{cc_group_left}
(B\varphi_3)(a_0,a_1,a_2)=\int\limits_{\mathbb{S}^1\times \mathbb{S}^1}{(a_0da_1da_2)(e)}=\int\limits_{\mathbb{S}^1\times \mathbb{R}}{\left(a_0da_1da_2\big|_{\xi_1=-1}^{\xi_1=1}\right)(e)}=
\\
\int\limits_{S^1\times \mathbb{R}}{\left[a_0\left.\left(\frac{\partial a_1}{\partial{x_1}}dx_1 + \frac{\partial a_1}{\partial{\xi_2}}d\xi_2\right)\left(\frac{\partial a_2}{\partial{x_1}}dx_1 + \frac{\partial a_2}{\partial{\xi_2}}d\xi_2\right)\right|_{\xi_1=-1}^{\xi_1=1}\right](e)}=
\\
\int\limits_{S^1\times \mathbb{R}}{\left[a_0\left.\left(\frac{\partial a_1}{\partial{x_1}}\frac{\partial a_2}{\partial{\xi_2}} - \frac{\partial a_1}{\partial{\xi_2}}\frac{\partial a_2}{\partial{x_1}}\right)\right|_{\xi_1=-1}^{\xi_1=1}\right](e)dx_1\wedge d\xi_2}.
\end{multline}

Finally, comparing~\eqref{cc_group_right} with~\eqref{cc_group_left}, we obtain the desired equality
$$
b\varphi_1=-\frac{i}{4\pi}B\varphi_3.
$$
\end{proof}

\begin{remark}
Pairings of cocycles in Theorems~\ref{cohom_class} and~\ref{cohom_class_group} 
give topological indices in index formulas obtained in~\cite{Fds17, BolSa2}.
\end{remark}

The reported study was funded by RFBR and DFG, project number 21-51-12006.

\end{document}